\newtheorem{theorem}{\sc Theorem}[section]
\newtheorem{lemma}[theorem]{\sc Lemma}
\newtheorem{corollary}[theorem]{\sc Corollary}
\newtheorem{hypothesis}[theorem]{\sc Hypothesis}
\begin{document}
\title[Neumann's BFC-theorem]{A stronger form of Neumann's BFC-theorem}
\author{Cristina Acciarri}

\address{Cristina Acciarri:  Department of Mathematics, University of Brasilia,
Brasilia-DF, 70910-900 Brazil}
\email{acciarricristina@yahoo.it}

\author{Pavel Shumyatsky }
\address{ Pavel Shumyatsky: Department of Mathematics, University of Brasilia,
Brasilia-DF, 70910-900 Brazil}
\email{pavel@unb.br}
\thanks{This research was supported by the Conselho Nacional de Desenvolvimento Cient\'{\i}fico e Tecnol\'ogico (CNPq),  and Funda\c c\~ao de Apoio \`a Pesquisa do Distrito Federal (FAPDF), Brazil.}
\keywords{}
\subjclass[2010]{20E45,  20F24}
\dedicatory{Dedicated to Andrea Lucchini on the occasion of his 60th birthday}

\begin{abstract} Given a group $G$, we write $x^G$ for the conjugacy class of $G$ containing the element $x$. A famous theorem of B. H. Neumann states that if $G$ is a group in which all conjugacy classes are finite with bounded size, then the derived group $G'$ is finite. We establish the following result.

\noindent Let $n$ be a positive integer and $K$ a subgroup of a group $G$ such that $|x^G|\leq n$ for each $x\in K$. Let $H=\langle K^G\rangle$ be the normal closure of $K$. Then the order of the derived group $H'$ is finite and $n$-bounded. 

\noindent Some corollaries of this result are also discussed.
\end{abstract}

\maketitle

\section{Introduction} Given a group $G$ and an element $x\in G$, we write $x^G$ for the conjugacy class containing $x$. Of course, if the number of elements in $x^G$ is finite, we have $|x^G|=[G:C_G(x)]$. A group is called a BFC-group if its conjugacy classes are finite and have bounded size. In 1954 B. H. Neumann discovered that in a BFC-group the derived group $G'$ is finite \cite{bhn}. It follows that if $|x^G|\leq n$ for each $x\in G$, then the order of $G'$ is bounded by a number depending only on $n$. A first explicit bound for the order of $G'$ was found by J. Wiegold \cite{wie}, and the best known was obtained in \cite{gumaroti} (see also \cite{neuvoe} and \cite{sesha}). The article \cite{dieshu} deals with groups $G$ in which conjugacy classes containing commutators are bounded. In particular, \cite{dieshu} contains a proof of the following theorem.
\bigskip

{\it Let $n$ be a positive integer and $G$ a group. If $|x^G|\leq n$ for any commutator $x$, then $|G''|$ is finite and $n$-bounded. }
\bigskip

Here  $G''$ denotes the second commutator subgroup of $G$. Throughout the article we use the expression ``$(a,b,\dots)$-bounded" to mean that a quantity is finite and bounded by a certain number depending only on the parameters $a,b,\dots$. The following extension of the aforementioned theorems to higher commutator subgroups was obtained in \cite{dms}.

\bigskip

{\it Let $n$ be a positive integer and $w$ a multilinear commutator word. Suppose that $G$ is a group in which $|x^G|\leq n$ for each $w$-value $x\in G$. Then the verbal subgroup $w(G)$ has derived group of finite $n$-bounded order. }
\bigskip

A related result for groups in which the conjugacy classes containing squares have finite bounded sizes was established in \cite{squares}.

In the present paper we obtain a variation of different nature for the Neumann theorem.

\begin{theorem} \label{main}
Let $n$ be a positive integer,  $G$ a group  having a subgroup $K$ such that $|x^G|\leq n$ for each $x\in K$, and let $H=\langle K^G\rangle$. Then the order of the derived group $H'$ is finite and $n$-bounded. 
\end{theorem}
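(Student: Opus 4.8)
The plan is to work throughout with the normal set $S=K^G$ rather than with $K$ itself. First I would observe that every element of $S$ inherits the class bound: if $s=x^g$ with $x\in K$, then $s^G=x^G$, so $|s^G|\le n$. Thus $H=\langle S\rangle$ is generated by a conjugation-invariant set of elements each having at most $n$ conjugates in $G$. Two reductions then come for free. Since $H'$ is the directed union of the subgroups $\langle K_0^G\rangle'$ taken over the finitely generated subgroups $K_0\le K$, a bound on $|\langle K_0^G\rangle'|$ depending only on $n$ (and not on $K_0$) would yield the theorem in general; so I may assume $K=\langle a_1,\dots,a_d\rangle$ is finitely generated. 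Then $C_G(K)=\bigcap_i C_G(a_i)$ has index at most $n^d$ in $G$, the subgroup $K$ has at most $n^d$ conjugates, and $H$ is the join of finitely many conjugates $K^{g_1},\dots,K^{g_r}$. Being generated by finitely many elements of finite $G$-class, $H$ has $C_G(H)$ of finite index, so by Schur's theorem $H'$ is already finite---but with a bound depending on $d$. The essential difficulty, exactly as in Neumann's original theorem, is to replace this by a bound depending on $n$ alone, independent of the number $r$ (equivalently $d$) of generators.

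The second ingredient is a bound on the orders of the basic commutators. For $x,y\in S$ put $M=\langle x,y\rangle$; then $C_G(M)=C_G(x)\cap C_G(y)$ has index at most $n^2$, whence $[M:Z(M)]\le n^2$, and Schur's theorem bounds $|M'|$ by a function of $n$. In particular each commutator $[x,y]$ with $x,y\in S$ lies in $M'$ and therefore has $n$-bounded order. Since $S$ generates $H$ and is conjugation-invariant, the set $\mathcal C=\{[x,y]:x,y\in S\}$ is a normal subset of $H$ consisting of elements of $n$-bounded order, and $H'=\langle\mathcal C\rangle$. At this point Dietzmann's lemma would finish the proof the moment $\mathcal C$ is known to be finite of $n$-bounded cardinality: a finite normal set of elements of finite order generates a finite subgroup whose order is bounded in terms of the size of the set and of the orders of its elements.

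What I already have is that $\mathcal C$ has bounded rows and columns: for fixed $x\in S$ one has $x^y\in x^G$ for every $y\in S$, so $\{[x,y]:y\in S\}\subseteq x^{-1}x^G$ has at most $n$ elements, and symmetrically $\{[x,y]:x\in S\}$ has at most $n$ elements for each fixed $y$. The main obstacle is precisely the passage from this local boundedness to a global bound $|\mathcal C|\le f(n)$ independent of $r$ and $d$; bounded rows and columns alone do not bound the number of distinct values of an arbitrary commutator table, so the group structure must be used in an essential way. This is the heart of the matter and the point at which Neumann's BFC machinery has to be brought in.

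To overcome it I would exploit two facts already in hand: the finitely generated reduction shows $H'$ to be finite, and each element of $\mathcal C$, being a commutator of two elements of $S$, has at most $n^2$ conjugates in $G$. Feeding this into a covering argument in the spirit of B. H. Neumann---or, alternatively, embedding the commutators into an auxiliary BFC-group and invoking the quantitative Neumann--Wiegold bound---should produce the required $n$-bounded estimate for $|\mathcal C|$. With $|\mathcal C|$ and the orders of its elements both bounded in terms of $n$, Dietzmann's lemma then gives $|H'|$ finite and $n$-bounded for finitely generated $K$, and the directed-union reduction removes the finite-generation hypothesis, completing the proof.
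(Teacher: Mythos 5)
Your set-up is sound as far as it goes: the reduction to finitely generated $K$ via a directed union, the observation that every element of $S=K^G$ still satisfies $|s^G|\le n$, the Schur-type bound on the order of each commutator $[x,y]$ with $x,y\in S$, the fact that $\mathcal C=\{[x,y]:x,y\in S\}$ is a normal subset generating $H'$, and the row/column bounds $\{[x,y]:y\in S\}\subseteq x^{-1}x^G$ are all correct. But the proof has a genuine gap exactly where you say the heart of the matter lies: you never actually bound $|\mathcal C|$ (equivalently, $|H'|$) by a function of $n$ alone. The two fixes you gesture at do not obviously work. Embedding into ``an auxiliary BFC-group'' founders because the natural candidate, $H'=\langle\mathcal C\rangle$, is not known to be a BFC-group: the elements of $\mathcal C$ have $n^2$-bounded classes, but their products need not, so Neumann--Wiegold cannot be invoked. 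And ``a covering argument in the spirit of B.~H.~Neumann'' is not an argument until you say which cosets cover what; the passage from bounded rows and columns to a bounded number of distinct table entries is precisely the nontrivial combinatorial-group-theoretic step, and restating it does not discharge it.

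For comparison, the paper closes this gap with a Wiegold-type lemma rather than a counting argument on $\mathcal C$. One picks $a\in K$ whose class in $H$ has maximal size $m\le n$, realizes $a^H=\{a^{b_1},\dots,a^{b_m}\}$ with each $b_i$ of $X$-length at most $m-1$, and sets $U=C_G(\langle b_1,\dots,b_m\rangle)$, a subgroup of $n$-bounded index. The key point (Lemma \ref{24}) is that for $u\in U$ with $ua\in X$ one gets $[H,u]\le[H,a]$, because $(ua)^{b_i}=ua^{b_i}$ already exhausts the class of $ua$. Since $[H,a]$ and hence its normal closure $T$ have $n$-bounded order (Lemma \ref{23}), the subgroup $K\cap U$ becomes central in $H$ modulo $T$; therefore $K$ has $n$-bounded image modulo $Z(H)$, Lemma \ref{dits} bounds $H/Z(H)$, and Schur finishes. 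Some device of this kind---exploiting the maximality of $|a^H|$ to force many elements of $K$ to commute with $H$ modulo a small subgroup---is what your sketch is missing, and without it the proof is incomplete.
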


Here, as usual, $\langle K^G\rangle$ denotes the normal closure of $K$ in $G$.

Theorem \ref{main} will be proved in the next section. Section 3 contains several easy but surprising corollaries of Theorem \ref{main} for finite groups. In Section 4 we handle profinite groups with restricted centralizers. A group $G$ is said to have restricted centralizers if for each $g$ in $G$ the centralizer $C_G(g)$ either is finite or has finite index in $G$. This notion was introduced by Shalev in \cite{shalev} where he showed that a profinite group with restricted centralizers is virtually abelian. We say that a profinite group has a property virtually if it has an open subgroup with that property. The recent article \cite{dms2} handles profinite groups with restricted centralizers of $w$-values for a multilinear commutator word $w$. The theorem proved in \cite{dms2} says that if $w$ is a multilinear commutator word and $G$ is a profinite group in which the centralizer of any $w$-value is either finite or open, then the verbal subgroup $w(G)$ is virtually abelian. In Section 4 we establish

\begin{theorem} \label{restricted}
Let $p$ be a prime and $G$ a profinite group in which the centralizer of each $p$-element is either finite or open. Then $G$ has a normal abelian pro-$p$ subgroup $N$ such that $G/N$ is virtually pro-$p'$.
\end{theorem}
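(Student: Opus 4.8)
The plan is to reduce the statement to an application of Theorem \ref{main} to a Sylow pro-$p$ subgroup, and then to convert the resulting finiteness of a derived group into the desired decomposition. I will repeatedly use the elementary fact, call it $(\star)$, that a profinite group $R$ is virtually pro-$p'$ if and only if its Sylow pro-$p$ subgroup $P_0$ is finite: if $P_0$ is finite one picks an open normal subgroup $U$ avoiding the finitely many nontrivial elements of $P_0$, and then $U\cap P_0^{\,g}=(U\cap P_0)^g=1$ for all $g$ shows $U$ is an open (normal) pro-$p'$ subgroup; the converse is immediate. Now let $P$ be a Sylow pro-$p$ subgroup of $G$. Every element of $P$ is a $p$-element, so $C_P(x)=C_G(x)\cap P$ is finite or open in $P$ for each $x\in P$; that is, $P$ has restricted centralizers, and Shalev's theorem \cite{shalev} furnishes an open abelian subgroup $A\le P$. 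If $P$ is finite then $(\star)$ makes $G$ itself virtually pro-$p'$ and we take $N=1$, so assume $P$, hence $A$, is infinite. The decisive observation is that every $x\in A$ has \emph{open} centralizer: since $A$ is abelian and infinite, $A\le C_G(x)$ forces $C_G(x)$ to be infinite, whence open by the dichotomy, so $|x^G|<\infty$.

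To feed Theorem \ref{main} I need a uniform bound. For each fixed $k$ the set $\{x\in A:\ |x^G|\le k\}$ is closed (the condition $[G:C_G(x)]\le k$ is closed), and by the previous paragraph these sets cover the compact group $A$. By the Baire category theorem one of them has nonempty interior, hence contains a coset $aA_1$ of an open subgroup $A_1\le A$; the inequality $C_G(x)\cap C_G(y)\le C_G(xy^{-1})$ then gives $|z^G|\le k^2=:m$ for every $z\in A_1=aA_1(aA_1)^{-1}$. Thus $K:=A_1$ is an open abelian pro-$p$ subgroup of $P$ with $|z^G|\le m$ for all $z\in K$, and Theorem \ref{main} applies to yield $H:=\langle K^G\rangle$ with $|H'|$ finite and $m$-bounded.

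It remains to extract $N$. Since $K\le P\cap H$ is open in $P$, the group $Q:=P\cap H$ is a Sylow pro-$p$ subgroup of $H$ that is open in $P$, and $Q'\le H'$ is finite. Passing to $\overline G=G/H'$, the normal subgroup $\overline H=H/H'$ is abelian, so its Sylow pro-$p$ subgroup $\overline Q$ is characteristic in $\overline H$ and therefore normal in $\overline G$; its preimage $N^{*}=QH'$ is normal in $G$, satisfies $[N^{*},N^{*}]\le H'$, and meets $P$ in the open subgroup $P^{*}:=P\cap N^{*}$. Because $H'$ is finite and discrete, the commutator map $N^{*}\times N^{*}\to H'$ is continuous with clopen fibre over $1$, and the tube lemma applied to the compact axis $N^{*}\times\{1\}$ produces an open subgroup contained in $Z(N^{*})$; hence $Z(N^{*})$ is open in $N^{*}$ and, being characteristic, is normal in $G$. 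I then take $N:=O_p(Z(N^{*}))$, the pro-$p$ component of the abelian group $Z(N^{*})$: it is normal in $G$, abelian and pro-$p$, and it contains the subgroup $Z(N^{*})\cap P$, which is open in $P$. Consequently $P/(P\cap N)$, the Sylow pro-$p$ subgroup of $G/N$, is finite, and $(\star)$ shows that $G/N$ is virtually pro-$p'$.

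The step I expect to be the main obstacle is the reduction to a uniform bound: the $p$-elements of $P$ with finite centralizer are difficult to handle directly, and it is precisely the passage to an \emph{infinite abelian} $A\le P$ (via Shalev) that rules them out and, through Baire category, upgrades ``each relevant conjugacy class is finite'' to the uniform bound that Theorem \ref{main} requires. The endgame—manufacturing from $|H'|<\infty$ a genuinely abelian, $G$-normal, pro-$p$ subgroup that is open in $P$—is then secured by the continuity (tube-lemma) argument above together with the bookkeeping lemma $(\star)$.
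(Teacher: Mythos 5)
Your proof is correct, but it takes a genuinely different route from the paper's. The paper argues by cases: if every $p$-element has a finite conjugacy class it applies the Baire-category lemma (Lemma \ref{fcbfc}) to $P$ itself; if some $p$-element has finite centralizer and $P$ is torsion it invokes Zelmanov's theorem together with Lemma \ref{114} to conclude that $P$ is finite; and if $P$ contains an element $a$ of infinite order it works with $P_0=P\cap C$ for an open normal subgroup $C\leq C_G(a)$, whose elements automatically have open centralizers. You instead observe that $P$ inherits the restricted-centralizer condition and apply Shalev's theorem to $P$ to produce an open abelian subgroup $A$; when $P$ is infinite every element of $A$ has infinite, hence open, centralizer, and your inline Baire argument (essentially the paper's Lemma \ref{fcbfc} localized to a coset) supplies the uniform bound that Theorem \ref{main} needs. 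This collapses the paper's three cases into one stroke, at the price of using Shalev's theorem as a black box (whose own proof rests on Zelmanov), whereas the paper uses Zelmanov only once, directly, plus the elementary Lemma \ref{114}. Your endgame is also heavier than necessary: once $|H'|<\infty$, the paper simply picks an open normal subgroup $J$ with $J\cap H'=1$ and takes $N=J\cap H$, which is abelian because $(J\cap H)'\leq J\cap H'=1$ and pro-$p$ because it embeds in $H/H'$, an abelian profinite group topologically generated by pro-$p$ subgroups and hence pro-$p$; your construction via $Z(QH')$, the tube lemma and $O_p(\cdot)$ is correct but accomplishes the same thing with considerably more machinery.
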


The proof of Theorem \ref{restricted} is based on Theorem \ref{main}. It also uses the celebrated theorem of Zelmanov \cite{zelmanov} which states that a torsion pro-$p$ group is locally nilpotent. 

\section{Proof of the main result} 

The purpose  of this section is to prove Theorem \ref{main}. We start with the following result which is somewhat similar to Dietzmann's lemma which says that  if $x$ is an element of finite order in a group $G$ such that $x^G$ is finite, then also the subgroup $\langle x^G \rangle$ is finite (see \cite{Rob2}).

\begin{lemma}\label{dits}
Let $i,j$ be  positive integers and $G$ a group having a subgroup $K$ such that $|x^G|\leq i$ for each $x\in K$. Suppose that $|K|\leq j$. Then $\langle K^G\rangle$ has finite $(i,j)$-bounded order. 
\end{lemma}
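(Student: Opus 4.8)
The plan is to exploit the finiteness of $K$ to present the normal closure $H=\langle K^G\rangle$ as a group generated by boundedly many elements, each having a small conjugacy class and finite order, and then to invoke Schur's theorem.

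First I would observe that, since each $x\in K$ satisfies $|x^G|\le i$ and $|K|\le j$, the set $K^G=\bigcup_{x\in K}x^G$ is finite with $|K^G|\le ij$. This set generates $H$, so $H$ is generated by at most $ij$ elements. Moreover, every $g\in K^G$ is conjugate in $G$ to some element of $K$, hence has order dividing $|K|\le j$; and since $H\le G$ we have $g^H\subseteq g^G$, so $|g^H|\le|g^G|\le i$. Thus each generator of $H$ has order at most $j$ and $H$-conjugacy class of size at most $i$.

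Next I would bound the index of the centre. Because $H$ is generated by $K^G$, an element lies in $Z(H)$ exactly when it centralizes every $g\in K^G$, so that
\[
Z(H)=\bigcap_{g\in K^G}C_H(g).
\]
Each $C_H(g)$ has index $|g^H|\le i$ in $H$, and the intersection runs over at most $ij$ subgroups, whence $[H:Z(H)]\le i^{ij}$. In particular $H/Z(H)$ is finite of $(i,j)$-bounded order, and by Schur's theorem, in its effective form, $H'$ is finite of $(i,j)$-bounded order.

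Finally I would control the abelianization: $H/H'$ is abelian and generated by the images of the at most $ij$ elements of $K^G$, each of order at most $j$, so $H/H'$ is finite of $(i,j)$-bounded order. Combining this with the bound on $|H'|$ yields $|H|=|H'|\cdot[H:H']$ finite and $(i,j)$-bounded, as required. The main obstacle is the passage through Schur's theorem: one must ensure that the bound on $[H:Z(H)]$ delivers an \emph{explicit} $(i,j)$-bound on $|H'|$, which is precisely what the quantitative version of Schur's theorem provides.
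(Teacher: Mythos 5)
Your proof is correct and follows essentially the same route as the paper: bound the index of $Z(H)$ in $H$, apply the quantitative form of Schur's theorem to bound $|H'|$, and then bound the abelian quotient using the finiteness of $K$. The only (immaterial) differences are that the paper bounds $[H:Z(H)]$ via the normal core of $C_G(K)$ in $G$ rather than by intersecting the centralizers of the generators, and it bounds the abelian quotient as a product of boundedly many conjugates of $K$ rather than as an abelian group generated by boundedly many elements of bounded order.
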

\begin{proof} Since $C_G(K)=\cap_{x\in K}C_G(x)$, we observe that the index of $C_G(K)$ in $G$ is at most $i^j$. Let $C$ be the maximal normal subgroup of $G$ contained in $C_G(K)$. Of course, the index of $C$ in $G$ is at most $(i^j)!$. Set $H=\langle K^G\rangle$. It follows that $|H/Z(H)|\leq (i^j)!$ and Schur's theorem \cite[10.1.4]{Rob} tells us that the order of $H'$ is $(i,j)$-bounded. Passing to the quotient $G/H'$ we can assume that $H$ is abelian. Then $H$ is a product of at most $i^j$ conjugates of $K$ and so the order of $H$ is at most $j^{i^j}$, as desired.
\end{proof}

Let $G$ be a group generated by a set $X$ such that $X = X^{-1}$. Given an element $g\in G$, we write $l_X(g)$ for the minimal number $l$ with the property that $g$ can be written as a product of $l$ elements of $X$.  The symbol $l_X(g)$ denotes the length of $g$ with respect to $X$. The proof of the following  result can be found in  \cite[Lemma 2.1]{dieshu}.

\begin{lemma}\label{21} Let $H$ be a group generated by a set $X = X^{-1}$ and let $K$ be a subgroup of finite index $m$ in $H$. Then each coset $Kb$ contains an element $g$ such that $l_X(g)\leq m-1$.
\end{lemma}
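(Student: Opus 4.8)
The plan is to build short coset representatives one coset at a time by a breadth-first, greedy argument on the right cosets of $K$ in $H$. List the $m$ distinct right cosets and let $X$ act on them by right multiplication: since $X = X^{-1}$, the coset $Ka$ is joined to $Kax$ for $x\in X$ in a symmetric fashion, so one obtains an undirected graph on $m$ vertices. The key observation I would record at the outset is that, for a coset $Kb$, the quantity $\min\{l_X(g): Kg = Kb\}$ equals the graph distance from the trivial coset $K$ to $Kb$: a product $x_1\cdots x_\ell$ with $x_i\in X$ lying in $Kb$ is exactly a walk of length $\ell$ from $K$ to $Kb$. Thus the statement reduces to the standard fact that in a connected graph on $m$ vertices every vertex lies at distance at most $m-1$ from a fixed one.

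Concretely I would run an induction that simultaneously establishes connectivity and the length bound. Start with $C_0 = K$, which contains the identity, an element of $X$-length $0$. Suppose I have already produced distinct cosets $C_0, \dots, C_{k-1}$ together with representatives $g_0, \dots, g_{k-1}$ satisfying $l_X(g_i)\le i$, and suppose $k < m$. Put $U = C_0\cup\dots\cup C_{k-1}$, a proper union of cosets. The crucial step is to find a new coset adjacent to one already chosen: I claim there exist $j < k$ and $x\in X$ with $Kg_j x \notin\{C_0,\dots,C_{k-1}\}$. Otherwise $UX\subseteq U$, so $U$ is closed under right multiplication by $X$; because $X = X^{-1}$ generates $H$, the subsemigroup it generates is the whole of $H$, and since the identity lies in $U$ this forces $U = H$, contradicting $U\neq H$. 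The hypothesis $X = X^{-1}$ is used in an essential way here: without it the positive products of elements of $X$ need not exhaust $H$.

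Having found such $j$ and $x$, I set $C_k = Kg_j x$ and $g_k = g_j x$. Then $Kg_k = C_k$ is a genuinely new coset and $l_X(g_k)\le l_X(g_j)+1\le (k-1)+1 = k$, so the inductive hypothesis is restored with one more coset in hand. After $m$ steps every coset appears as some $C_i$ with $0\le i\le m-1$, and its representative $g_i$ satisfies $l_X(g_i)\le i\le m-1$; this yields the required short element in each coset $Kb$. I do not anticipate a serious obstacle: the only point requiring genuine care is the closure argument of the previous paragraph, and in particular the role of $X = X^{-1}$, which is precisely what guarantees that the chosen family of cosets can always be enlarged by appending a single generator rather than getting stuck.
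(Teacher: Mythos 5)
Your proof is correct. The paper itself does not prove this lemma but cites \cite[Lemma 2.1]{dieshu}, and your greedy/breadth-first argument on the coset (Schreier) graph --- using $X=X^{-1}$ and $\langle X\rangle=H$ to show that the union of already-reached cosets cannot be closed under right multiplication by $X$ unless it is all of $H$ --- is exactly the standard proof of that cited result.
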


We now fix some notation and hypothesis  that we will use within  the current section.

\begin{hypothesis}\label{hypo} Let $G$ be a group  having a subgroup $K$ such that $|x^G|\leq n$, for any $x\in K$, and let $H=\langle K^G\rangle$. Denote by $X$ the set of all conjugates of elements of $K$, that is, $X=\bigcup_{g\in G} K^g$. Let $m$ be the maximum of indices of $C_H (x)$ in $H$ for $x\in K$. Select $a\in K$ such that $|a^H|=m$. Choose   $b_1,\ldots,b_m$ in $H$ such that $l_X(b_i)\leq m-1$ and $a^H=\{a^{b_i}; i=1,\ldots,m\}$. (The existence of the elements $b_i$ is guaranteed by Lemma \ref{21}.) Set  $U=C_G(\langle b_1,\ldots,b_m \rangle)$.
\end{hypothesis}

Some of the arguments we use in the proof of the next lemmas are similar to those used in \cite{dieshu}. 
\begin{lemma}\label{23} Assume Hypothesis \ref{hypo}. Then for any $x\in X$ the subgroup $[H, x]$ has finite
$m$-bounded order.
\end{lemma}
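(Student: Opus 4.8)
The plan is to exploit the fact that the commutators $[h,x]$ generating $[H,x]$ have their conjugators $h$ inside $H=\langle X\rangle$, so that $[H,x]$ is really built from commutators of \emph{pairs} of elements of $X$, each of which will be shown to lie in a finite subgroup. First I would record a uniform bound: for every $x\in X$, writing $x=k^{g}$ with $k\in K$ and using that $H$ is normal in $G$, one gets $C_{H}(x)=H\cap C_{G}(k)^{g}=(H\cap C_{G}(k))^{g}=C_{H}(k)^{g}$, so $[H:C_{H}(x)]=[H:C_{H}(k)]\le m$. Hence $|x^{H}|\le m$ for \emph{every} $x\in X$, not only for $x\in K$. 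Choosing coset representatives of $C_{H}(x)$ in $H$ and using $[ch_{i},x]=[h_{i},x]$ for $c\in C_{H}(x)$, the set $\{[h,x]:h\in H\}$ has at most $m$ elements, so $[H,x]$ is generated by at most $m$ commutators $d_{i}=[h_{i},x]$.

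Next I would treat pairs: for any $x,w\in X$ consider $\langle x,w\rangle$. Since an element centralising both generators centralises the subgroup, $C_{\langle x,w\rangle}(x)\cap C_{\langle x,w\rangle}(w)=Z(\langle x,w\rangle)$, and both $|x^{\langle x,w\rangle}|$ and $|w^{\langle x,w\rangle}|$ are at most $m$; thus $Z(\langle x,w\rangle)$ has index at most $m^{2}$, and Schur's theorem gives that $\langle x,w\rangle'$ is finite of $m$-bounded order. In particular every commutator $[x,w]$ with $w\in X$, and every conjugate of it, has finite $m$-bounded order. Using the identity $[x,ab]=[x,b][x,a]^{b}$ together with $H=\langle X\rangle$, I would then show that $[H,x]$ is generated by the elements $[x,w]^{b}$ (with $w\in X$, $b\in H$), all of bounded finite order.

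To convert this into finiteness I would control $[H,x]$ from above through the centre-by-finite structure of $E=\langle x^{H}\rangle$. The at most $m$ conjugates $x_{1},\dots,x_{r}$ of $x$ satisfy $C_{H}(E)=\bigcap_{i}C_{H}(x_{i})$, an intersection of at most $m$ subgroups each of index at most $m$, so $[E:Z(E)]\le m^{m}$; since $[H,x]\cap Z(E)\subseteq Z([H,x])$, this yields $[\,[H,x]:Z([H,x])\,]\le m^{m}$ and, by Schur's theorem, $[H,x]'$ is finite of $m$-bounded order. Because $[H,x]$ is generated by at most $m$ elements it is finitely generated, so its abelianisation is a finitely generated abelian group generated by the images of the $d_{i}$; expanding each $d_{i}$ via the identity above expresses its image as a sum of images of the $[x,w]^{b}$, whose orders divide the $m$-bounded exponent of the commutators $[x,w]$. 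Thus $[H,x]/[H,x]'$ is generated by at most $m$ elements of $m$-bounded order, hence finite of $m$-bounded order, and combining the two bounds gives $|[H,x]|$ finite and $m$-bounded.

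I expect the main obstacle to be precisely the step from ``centre-by-finite'' to ``finite''. The estimate $[E:Z(E)]\le m^{m}$ alone only bounds $[H,x]'$ and leaves open an infinite free-abelian part in $Z([H,x])$; indeed $[H,x]$ is a priori only a finitely generated centre-by-finite group, which could be infinite. Ruling this out is exactly what forces the use of $H=\langle X\rangle$: it is the reduction to the pairwise commutators $[x,w]$ of elements of $X$, each sitting inside a finite $2$-generator section via Schur's theorem, that makes $[H,x]$ periodic and therefore finite. I would take care that every auxiliary bound is expressed through $m$ (using $|x^{H}|\le m$ for all $x\in X$), so that the final order of $[H,x]$ is $m$-bounded as claimed.
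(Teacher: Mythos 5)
Your proof is correct, but it takes a genuinely different route from the paper's. The paper first uses its Lemma \ref{21} to pick generators $y_1,\dots,y_m$ of $[H,x]$-type with $l_X(y_i)\leq m-1$, decomposes each $[y_i,x]$ into $H$-conjugates of commutators $[y_{ij},x]$ with $y_{ij}\in X$, and then throws all the $H$-conjugates of $x$ and of the $y_{ij}$ (boundedly many, since each element of $X$ has at most $m$ conjugates in $H$) into a single subgroup $T$; the centre of $T$ has $m$-bounded index, so one application of Schur's theorem bounds $|T'|$, and $[H,x]\leq T'$ finishes the proof. You instead avoid Lemma \ref{21} entirely and split $|[H,x]|=|[H,x]'|\cdot|[H,x]:[H,x]'|$: the first factor is bounded by applying Schur to $E=\langle x^H\rangle$ (using $[H,x]\leq E$ and $[E:Z(E)]\leq m^m$), and the second by showing that the at most $m$ generators of $[H,x]$ become torsion of $m$-bounded order in the abelianisation, via Schur applied to the two-generator subgroups $\langle u,v\rangle$ with $u,v\in X$. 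Your preliminary observation that $|x^H|\leq m$ for \emph{all} $x\in X$ (via $C_H(k^g)=C_H(k)^g$, using normality of $H$) is also implicitly used by the paper, and your justification is the right one. Your closing diagnosis is accurate: centre-by-finite alone leaves a possible free-abelian part, and it is exactly the periodicity of the generators coming from the pairwise Schur argument that rules it out; the paper sidesteps this issue by landing $[H,x]$ inside the derived subgroup of the auxiliary group $T$, which is automatically finite. The paper's argument is more economical (one application of Schur), while yours is slightly longer but dispenses with the bounded-length lemma; both yield the required $m$-bounded estimate.
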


\begin{proof} Take $x \in X$. Since the index of $C_H(x)$ in $H$ is at most $m$, by Lemma \ref{21}, we can choose elements $y_1,\ldots,y_m$ in $H$ such that $l_X(y_i)\leq m-1$ and the subgroup $[H,x]$ is generated by the commutators $[y_i,x]$, for $i=1,\ldots,m$.  For any such $i$  write $y_i=y_{i1}\ldots y_{i(m-1)}$, with $y_{ij}\in X$. By using standard commutator identities we can rewrite $[y_i,x]$ as a product of conjugates in $H$ of the commutators $[y_{ij},x]$. Let $\{h_1,\ldots,h_s\}$ be the conjugates in $H$  of all elements from the set $\{x,y_{ij} \mid  1\leq i,j \leq m\}.$ Note that the number $s$ here is $m$-bounded. This follows form the fact that $C_H(x)$ has index at most $m$ in $H$ for each $x\in X$.  Put $T=\langle h_1,\ldots,h_s \rangle$. Since $[H,x]$ is contained in $T'$, it is enough to show that $T'$ has finite $m$-bounded order. Observe that the center $Z(T)$ has index at most $m^s$ in $T$, since the index of $C_H(h_i)$ is at most $m$ in $H$ for any $i=1,\ldots,s$.  Thus, by Schur's theorem \cite[10.1.4]{Rob}, we conclude that $T'$ has finite $m$-bounded order, as desired. 
\end{proof}

Note that the index of $U$ in $G$ is $n$-bounded. Indeed, since $l_{X}(b_i)\leq m-1$ we can write $b_i=b_{i1}\ldots b_{i(m-1)}$, where $b_{ij}\in X$ and $i=1,\ldots,m$. By the hypothesis the index of $C_G(b_{ij})$ in $G$ is at most $n$ for any such element $b_{ij}$. Thus, $[G:U]\leq n^{(m-1)m}$. 

The next result is somewhat analogous to Lemma 4.5 of Wiegold  in \cite{wie}.
\begin{lemma}\label{24} Assume Hypothesis \ref{hypo}. Suppose that $u\in U$ and $ua\in X$. Then $[H, u] \leq [H, a]$.
\end{lemma}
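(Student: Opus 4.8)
The plan is to exploit two facts in tandem: that $u$ centralizes each $b_i$, and that both $a$ and $ua$ lie in $X$, so that their $H$-conjugacy classes have size at most $m$. Recall first the observation already used in the proof of Lemma~\ref{23}: for every $x\in X$ the centralizer $C_H(x)$ has index at most $m$ in $H$. Indeed, writing $x=k^g$ with $k\in K$ and using that $H$ is normal in $G$, one gets $C_H(x)=C_H(k)^g$, whence $[H:C_H(x)]=[H:C_H(k)]\le m$. In particular, since $ua\in X$, the class $(ua)^H$ has at most $m$ elements. Throughout I use the convention $[h,a]=h^{-1}a^{-1}ha$, so that $[h,a]=(a^h)^{-1}a$.

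The key step is a counting argument identifying the whole class $(ua)^H$. Since $u\in U=C_G(\langle b_1,\ldots,b_m\rangle)$, we have $u^{b_i}=u$, and therefore $(ua)^{b_i}=u^{b_i}a^{b_i}=u\,a^{b_i}$ for each $i$. The elements $a^{b_1},\ldots,a^{b_m}$ are pairwise distinct, because by Hypothesis~\ref{hypo} they constitute the class $a^H$, which has exactly $m$ elements; consequently the products $u\,a^{b_1},\ldots,u\,a^{b_m}$ are pairwise distinct as well. These are $m$ distinct conjugates of $ua$, and since $|(ua)^H|\le m$ they must exhaust the class, so that $(ua)^H=\{u\,a^{b_i}\;:\;i=1,\ldots,m\}$.

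With this in hand I would first prove the inclusion $[H,ua]\le[H,a]$. For an arbitrary $h\in H$ the conjugate $(ua)^h$ equals $u\,a^{b_i}$ for some $i$, and hence
$[h,ua]=\bigl((ua)^h\bigr)^{-1}(ua)=(u\,a^{b_i})^{-1}(ua)=(a^{b_i})^{-1}a=[b_i,a]\in[H,a].$
As the commutators $[h,ua]$, $h\in H$, generate $[H,ua]$, this yields $[H,ua]\le[H,a]$. Finally I would deduce the stated inclusion $[H,u]\le[H,a]$ by a routine manipulation: writing $u=(ua)a^{-1}$ and applying $[h,xy]=[h,y][h,x]^y$ gives $[h,u]=[h,a^{-1}]\,[h,ua]^{a^{-1}}$. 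Here $[h,ua]\in[H,a]$ by the previous step, while $[h,a^{-1}]=\bigl([h,a]^{a^{-1}}\bigr)^{-1}\in[H,a]$; moreover $[H,a]$ is $a$-invariant, since $[h,a]^a=[h^a,a]\in[H,a]$. Thus both factors lie in the $a^{-1}$-invariant subgroup $[H,a]$, so $[h,u]\in[H,a]$ for every $h\in H$, proving $[H,u]\le[H,a]$.

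The main obstacle is the central observation of the second paragraph: recognizing that $u$ commuting with the $b_i$ produces the clean identity $(ua)^{b_i}=u\,a^{b_i}$, and that these $m$ products, being distinct, must fill out the entire class $(ua)^H$ of size at most $m$. Once this is seen, the collapse of each generator of $[H,ua]$ to a commutator $[b_i,a]$ is immediate, and the remaining reductions are purely formal commutator calculus.
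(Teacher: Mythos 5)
Your proof is correct and follows essentially the same route as the paper: the heart of both arguments is the observation that $(ua)^{b_i}=u\,a^{b_i}$ together with the counting step showing these $m$ distinct elements exhaust $(ua)^H$. The only difference is cosmetic — the paper extracts $[u,h]=a^b a^{-h}\in[H,a]$ directly from $u^h a^h=ua^b$, whereas you pass through $[H,ua]\le[H,a]$ and then apply standard commutator identities; both finishes are valid.
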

\begin{proof} Recall that $U=C_G(\langle b_1,\ldots,b_m \rangle)$. For each $i=1,\ldots,m$ we have $(ua)^{b_i}=ua^{b_i}$, since $u$ belongs to $U$.  We know that $ua \in X$, so, taking into account the hypothesis on the order of the conjugacy class of $ua$ in $H$, we deduce that $(ua)^H$ consists  exactly of the elements $ua^{b_i}$, for $i=1,\ldots,m$.   Thus, given an arbitrary element $h\in H$, there exists $b\in \{b_1,\ldots,b_m\}$ such that $(ua)^h=ua^b$ and so $u^ha^h=ua^b$. It follows that $[u,h]=a^ba^{-h}\in [H,a]$, and the result holds.
\end{proof}

We are now ready to embark on the proof of Theorem \ref{main}.

\begin{proof}[Proof of Theorem \ref{main}]

Recall that $G$ is a group having a subgroup $K$ such that $|x^G|\leq n$ for any  $x$ in $K$. Let $H=\langle K^G\rangle$.   We need to show that $|H'|$ is finite and $n$-bounded. 

As above, the symbol $X$ denotes  the set  of all conjugates of  elements of $K$. Thus $H=\langle X \rangle$. Let $m$ be the maximum of indices of $C_H (x)$ in $H$, where $x\in K$. Note that $m\leq n$. Choose $a\in K$ such that $C_H(a)$ has index precisely $m$ in $H$.  By Lemma \ref{21} we can choose $b_1,\ldots,b_m$ in $H$ such that $l_{X}(b_i)\leq m-1$ and $a^H=\{a^{b_i}; i=1,\ldots,m\}$. Set $U = C_G(\langle b_1,\ldots,b_m \rangle)$. Note that the index of $U$ in $G$ is $n$-bounded. 

By the hypothesis $a$ has at most $n$ conjugates in $G$, say $\{a^{g_1},\ldots,a^{g_n}\}$, that are elements of $X$. Let $T$ be the normal closure in $G$  of  the subgroup $[H,a]$, thus $T=[H,a^{g_1}]\cdots [H,a^{g_n}]$. By Lemma \ref{23}  each of these subgroups has $n$-bounded order. We conclude that $T$ has finite $n$-bounded order.  

 Let $K_0=K\cap U$. Note that for any $u\in K_0$ the product $ua$ belongs to $K$.  Therefore, by  Lemma \ref{24}, for any $u$ in $K_0$, the subgroup $[H,u]$ is contained in $[H,a]$.
 
 Since $T$ has finite $n$-bounded order, it is sufficient to show that the derived group of the quotient $H/T$ has finite $n$-bounded order. We pass now  to the quotient $G/T$ and for the  sake of simplicity  the images of $G$, $H, U, K$ and  $K_0$   will be denoted by the same symbols.  Note that the subgroup $K_0$ becomes central in $H$ modulo $T$.   Next we consider the quotient $G/Z(H)$. Since the image of $K_0$ in $G/Z(H)$ is trivial, we deduce that the subgroup $K$ has $n$-bounded order modulo $Z(H)$.  Indeed, in $G/Z(H)$ the order of the image of $K$ is at most the index of $U$ in $G$.  Now it follows from Lemma \ref{dits} that $H$ has $n$-bounded order modulo $Z(H)$. Thus,  by Schur's theorem \cite[10.1.4]{Rob}, we conclude that $|H'|$ is $n$-bounded, as desired.
\end{proof}

\section{Corollaries concerning finite groups}

In this section we will record several easy corollaries of Theorem \ref{main} concerning finite groups. If $\phi$ is an automorphism of a group $G$, then  the centralizer $C_G(\phi)$ is the subgroup formed by the elements $x\in G$ such that $x^\phi=x$. In the case where $C_G(\phi)=1$ the automorphism $\phi$ is called fixed-point-free. A famous result of Thompson \cite{tho} says that a finite group admitting a fixed-point-free automorphism of prime order is nilpotent. Higman proved that for each prime $p$ there exists a number $h=h(p)$ such that whenever a nilpotent group $G$ admits a fixed-point-free automorphism of order $p$, it follows that $G$ is nilpotent of class at most $h$ \cite{hi}. Therefore the nilpotency class of a finite group admitting a fixed-point-free automorphism of order $p$ is at most $h$.

If $\phi$ is an automorphism of a finite group $G$ such that $(|G|,|\phi|)=1$, then for each normal $\phi$-invariant subgroup $N$ of $G$ we have $C_{G/N}(\phi)=C_G(\phi)N/N$ (see for example \cite[Theorem 6.2.2 (iv)]{go}). Write $\gamma_i(G)$ for the $i$th term of the lower central series of a group $G$. We have

\begin{corollary} \label{prime}
Let $n$ be a positive integer and $G$ a finite group admitting an automorphism $\phi$ of prime order $p$ such that $(|G|,p)=1$ and $|x^G|\leq n$ for each $x\in C_G(\phi)$. Let $h=h(p)$. Then the order of the derived group of $\gamma_{h+1}(G)$ is $n$-bounded. 
\end{corollary}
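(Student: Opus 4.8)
The plan is to deduce Corollary~\ref{prime} by applying Theorem~\ref{main} to the subgroup $K = C_G(\phi)$, and then invoking the Thompson--Higman theory to control the relationship between $C_G(\phi)$ and the upper terms of the lower central series of $G$.

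First I would set $K = C_G(\phi)$. By hypothesis $|x^G| \leq n$ for each $x \in K$, so Theorem~\ref{main} applies directly and yields that $H' $ has finite $n$-bounded order, where $H = \langle K^G\rangle$ is the normal closure of $C_G(\phi)$ in $G$. The whole game is therefore to show that $\gamma_{h+1}(G)$ is contained in $H$, or at least that $\gamma_{h+1}(G)'$ is controlled by $H'$; then the $n$-bounded bound on $|H'|$ transfers to $|\gamma_{h+1}(G)'|$.

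The key step is to understand $G/H$. Since $\phi$ is coprime to $|G|$ and $H$ is $\phi$-invariant (being the normal closure of the $\phi$-invariant subgroup $C_G(\phi)$), the coprime formula $C_{G/H}(\phi) = C_G(\phi)H/H$ applies. But $C_G(\phi) = K \leq H$, so $C_G(\phi)H/H$ is trivial, meaning $\phi$ acts fixed-point-freely on $G/H$. By Thompson's theorem the finite group $G/H$ is nilpotent, and by Higman's bound its nilpotency class is at most $h = h(p)$. Consequently $\gamma_{h+1}(G/H) = 1$, which is exactly the statement $\gamma_{h+1}(G) \leq H$. Taking derived groups, $\gamma_{h+1}(G)' \leq H'$, so $|\gamma_{h+1}(G)'| \leq |H'|$ is $n$-bounded, as desired.

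I expect the main obstacle to be verifying that the coprimality condition is genuinely inherited by the quotient $G/H$ and that $H$ is honestly $\phi$-invariant so that the fixed-point computation $C_{G/H}(\phi) = 1$ is legitimate. These are the two facts that make Thompson's and Higman's theorems applicable; both follow cleanly once one observes that $C_G(\phi)$ is $\phi$-invariant (hence so is its normal closure) and that $(|G|,p)=1$ forces $(|G/H|,p)=1$. Everything else is a routine transfer of the bound from $H'$ to the subgroup $\gamma_{h+1}(G)'$.
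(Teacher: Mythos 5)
Your proposal is correct and follows exactly the paper's own argument: apply Theorem~\ref{main} to $K=C_G(\phi)$, use the coprime fixed-point formula to see that $\phi$ acts fixed-point-freely on $G/H$, and then invoke Thompson's and Higman's theorems to get $\gamma_{h+1}(G)\leq H$. The details you flag as potential obstacles (the $\phi$-invariance of $H$ and the applicability of $C_{G/N}(\phi)=C_G(\phi)N/N$) are precisely the facts the paper records just before stating the corollary, so nothing is missing.
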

\begin{proof} Let $H$ be the normal closure of $C_G(\phi)$. Theorem \ref{main} tells us that the order of $H'$ is $n$-bounded. Note that $\phi$ acts fixed-point-freely on the quotient $G/H$, whence $G/H$ is nilpotent of class at most $h$. Therefore $\gamma_{h+1}(G)\leq H$ and so the derived group of $\gamma_{h+1}(G)$ is contained in $H'$. The result follows.
\end{proof}

Observe that in the particular case where $p=2$ Corollary \ref{prime} states that $G''$, the second derived group of $G$, has $n$-bounded order. This is because $h(2)=1$, that is, a finite group admitting a fixed-point-free automorphism of order two is necessarily abelian.

Recall that any finite soluble group $G$ has a Sylow basis ---  a family of pairwise permutable Sylow $p_i$-subgroups $P_i$ of $G$, exactly one for each prime divisor of the order of $G$, and any two Sylow bases are conjugate. The system normalizer (also known as the basis normalizer) of such a Sylow basis in $G$ is the intersection $T=\bigcap _i N_G(P_i)$. If $G$ is a finite soluble group and $T$ is a system normalizer in $G$, then $G=\langle T^G\rangle$ (see \cite[Theorem 9.2.8]{Rob}). Therefore we deduce

\begin{corollary} \label{system}
Let $n$ be a positive integer, $G$ a finite soluble group having a system normalizer $T$ such that $|x^G|\leq n$ for each $x\in T$. Then the order of the derived group $G'$ is $n$-bounded. 
\end{corollary}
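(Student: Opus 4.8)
The plan is to invoke Theorem \ref{main} directly, taking the subgroup $K$ of that theorem to be the system normalizer $T$ itself. The hypothesis of the corollary states precisely that $|x^G|\leq n$ for each $x\in T$, which is exactly the condition required to apply Theorem \ref{main} with $K=T$. Accordingly, Theorem \ref{main} yields at once that the derived group of $H:=\langle T^G\rangle$ has finite, $n$-bounded order.

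It then remains only to identify $H$ with the whole group. Here I would appeal to the structural fact recorded immediately before the statement: for a finite soluble group $G$ with system normalizer $T$ one has $G=\langle T^G\rangle$ (see \cite[Theorem 9.2.8]{Rob}). Consequently $H=G$, and so the $n$-bounded bound on $|H'|$ supplied by Theorem \ref{main} is literally an $n$-bounded bound on $|G'|$, which is the assertion of the corollary.

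Since the entire substance of the argument is already encapsulated in Theorem \ref{main} together with the cited theorem on system normalizers, there is no genuine obstacle to overcome at this stage; the corollary is a direct specialisation. The single point worth flagging is that solubility is \emph{essential}, precisely because it is what guarantees $\langle T^G\rangle=G$. Without this hypothesis the normal closure of a system normalizer need not exhaust $G$, and Theorem \ref{main} would then only bound the derived group of that normal closure rather than $G'$ itself.
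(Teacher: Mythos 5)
Your proof is correct and is exactly the argument the paper intends: it applies Theorem \ref{main} with $K=T$ and then uses the cited fact that $G=\langle T^G\rangle$ for a system normalizer of a finite soluble group. The paper presents this as an immediate deduction ("Therefore we deduce") from the same two ingredients, so there is nothing to add.
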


Given a finite group $G$, the soluble residual of $G$ is defined as the smallest normal subgroup $R$ with the property that the quotient $G/R$ is soluble.

\begin{corollary} \label{residual}
Let $n$ be a positive integer and $G$ a finite group in which $|x^G|\leq n$ for each $2$-element $x\in G$. Then the order of the soluble residual of $G$ is $n$-bounded. 
\end{corollary}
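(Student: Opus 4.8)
The plan is to apply Theorem \ref{main} with the subgroup $K$ taken to be a Sylow $2$-subgroup $P$ of $G$. Every element of $P$ is a $2$-element, so the hypothesis $|x^G|\leq n$ holds for all $x\in P$, and Theorem \ref{main} applies: setting $H=\langle P^G\rangle$, the normal closure of $P$ in $G$, we conclude that $|H'|$ is finite and $n$-bounded. Since $H'$ is characteristic in the normal subgroup $H$, it is itself normal in $G$. The whole strategy is to show that $H'$ already contains the soluble residual $R$, so that the $n$-bounded order of $H'$ bounds $|R|$.

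To this end I would first observe that $H$ contains a full Sylow $2$-subgroup of $G$, namely $P$, so $|H|$ is divisible by the $2$-part of $|G|$. Hence the index $[G:H]$ is odd, and by the Feit--Thompson odd order theorem the quotient $G/H$ is soluble. Next, $H/H'$ is abelian, hence soluble. Thus $G/H'$ is an extension of the soluble group $H/H'$ by the soluble group $(G/H')/(H/H')\cong G/H$, and an extension of a soluble group by a soluble group is soluble; therefore $G/H'$ is soluble.

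Since $H'$ is a normal subgroup of $G$ with soluble quotient, the definition of the soluble residual forces $R\leq H'$. Consequently $|R|\leq |H'|$, and the latter is $n$-bounded by the first step. This gives the desired $n$-bounded estimate for the order of the soluble residual.

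The only nonroutine ingredient, and the step I expect to be the crux, is the appeal to the Feit--Thompson theorem to guarantee that a group of odd order is soluble; everything else is a direct combination of Theorem \ref{main} with elementary facts about soluble extensions and the definition of the soluble residual. The key insight that makes the argument work is the choice $K=P$: taking the normal closure of a Sylow $2$-subgroup produces a normal subgroup whose quotient has odd order, which is exactly what converts the bound on $|H'|$ into a bound on the soluble residual.
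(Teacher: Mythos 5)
Your proposal is correct and follows essentially the same route as the paper: take $K$ to be a Sylow $2$-subgroup, apply Theorem \ref{main} to its normal closure $H$, and use the Feit--Thompson theorem to see that $G/H'$ is soluble, so the soluble residual lies in $H'$. Your write-up merely spells out the soluble-by-soluble extension step that the paper leaves implicit.
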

\begin{proof} If $G$ has odd order, then by the Feit-Thompson Theorem \cite{fetho} $G$ is soluble and so the soluble residual of $G$ is trivial. Therefore we assume that the order of $G$ is even. Let $S$ be a Sylow 2-subgroup of $G$, and let $H$ be the normal closure of $S$. Theorem \ref{main} tells us that the order of $H'$ is $n$-bounded. The Feit-Thompson Theorem shows that $G/H'$ is soluble. Hence, $H'$ contains the soluble residual of $G$ and the result follows.
\end{proof}

\section{ Proof of Theorem \ref{restricted}}

Recall that a group $G$ is said to have restricted centralizers if for each $g$ in $G$ the centralizer $C_G(g)$ either is finite or has finite index in $G$. Of course, in the case where $G$ is profinite this is equivalent to saying that $C_G(g)$ either is finite or open. Shalev showed in \cite{shalev} that a profinite group with restricted centralizers is virtually abelian. The goal of  this section is to establish Theorem \ref{restricted}:\bigskip

{\it Let $p$ be a prime and $G$ a profinite group in which the centralizer of each $p$-element is either finite or open. Then $G$ has a normal abelian pro-$p$ subgroup $N$ such that $G/N$ is virtually pro-$p'$.}
\bigskip

Throughout, by a subgroup of a profinite group we mean a closed subgroup.
As usual, we say that a group has certain property locally if every finitely generated subgroup has that property. The next lemma is almost obvious. We include the proof for the reader's convenience. Note that the lemma is no longer true if we drop the assumption that $G$ is residually finite (cf.\ the non-abelian semidirect product of the Pr\"ufer group $C_{2^\infty}$ by the group of order 2).

\begin{lemma}\label{114} 
Let $G$ be a locally nilpotent group containing an element with finite centralizer. Suppose that $G$ is residually finite. Then $G$ is finite.
\end{lemma}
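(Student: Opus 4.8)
The plan is to prove that a locally nilpotent, residually finite group $G$ containing an element $a$ with finite centralizer $C_G(a)$ must be finite. The key observation is that in a locally nilpotent group, the centralizer condition on a single element becomes very rigid: in a nilpotent group, centralizers are never ``small'' unless the group itself is small, so the hypothesis should force $G$ to be finite outright. Concretely, I would first reduce to understanding the consequence of $|C_G(a)|<\infty$ inside each finitely generated subgroup.

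First I would fix the element $a$ with $C_G(a)$ finite, and take any finitely generated subgroup $M$ with $a\in M$. Since $G$ is locally nilpotent, $M$ is nilpotent. Now $C_M(a)=C_G(a)\cap M$ is finite. In a nilpotent group an element with finite centralizer generates a situation that is extremely constrained: the center $Z(M)$ is contained in $C_M(a)$, so $Z(M)$ is finite, and in a finitely generated nilpotent group finiteness of the center forces the whole group to be finite (a finitely generated nilpotent group is either finite or has infinite center, since nontrivial torsion-free nilpotent groups have infinite center). Hence every finitely generated subgroup of $G$ containing $a$ is finite, which shows that $G$ is locally finite and in fact that $a$ lies in no infinite finitely generated subgroup.

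Next I would upgrade ``locally finite with $a$ having finite centralizer'' to ``$G$ finite.'' The natural route is to bound $|G|$ by $|C_G(a)|$. In a nilpotent group, if $a$ has centralizer of order $c$, then the conjugacy class $a^M=[M:C_M(a)]$ has size at most $[M:C_M(a)]$, but more usefully, the normal closure $\langle a^M\rangle$ is controlled; since $M$ is nilpotent and $Z(M)\le C_M(a)$ is finite of bounded order, $|M|$ is bounded by a function of $c$ alone (independent of $M$). This uniform bound across all finitely generated $M\ni a$ means the finite subgroups containing $a$ have bounded order, so their union, which is all of $\langle a^G\rangle$ together with enough of $G$, is finite; combined with residual finiteness one concludes $G$ itself is finite.

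The main obstacle will be making the passage from ``every finitely generated subgroup containing $a$ is finite of bounded order'' to ``$G$ is finite'' fully rigorous, and in particular locating where residual finiteness is genuinely used --- the parenthetical remark about $C_{2^\infty}\rtimes C_2$ warns that without residual finiteness the conclusion fails, so the argument cannot merely invoke local finiteness. I expect the clean way is to argue that the finiteness and uniform bound on subgroups containing $a$ forces the centralizer of $a$ to have finite index (indeed $G$ locally finite with a uniform bound on finite subgroups through $a$), and then residual finiteness rules out the pathological locally finite extensions by ensuring that the finitely many conjugacy constraints actually cut $G$ down to a finite quotient faithfully; I would verify this last step carefully rather than gloss over it.
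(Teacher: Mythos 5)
Your first step is sound: for any finitely generated $M\ni a$, $M$ is nilpotent, $Z(M)\le C_M(a)$ is finite, and a finitely generated nilpotent group with finite centre is finite; hence $G$ is locally finite. But the second step contains a genuine, unfixable gap. You claim that since $Z(M)\le C_M(a)$ is finite of bounded order, $|M|$ is bounded by a function of $c=|C_G(a)|$ alone. This is false: in the dihedral $2$-group $D_{2^k}$ a reflection $a$ has centralizer of order $4$, yet $|D_{2^k}|=2^k$ is unbounded, and these groups are nilpotent. Worse, the union $\bigcup_k D_{2^k}=C_{2^\infty}\rtimes C_2$ is exactly the locally nilpotent, locally finite, non-residually-finite counterexample the statement itself warns about: it is infinite, and the reflection has finite centralizer. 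So no argument of the shape you propose --- deriving a uniform bound on finitely generated subgroups through $a$ and taking a union --- can succeed, because such an argument would never use residual finiteness, and the conclusion is false without it. Your closing paragraph acknowledges that you have not located where residual finiteness enters, but offers only the hope that it ``cuts $G$ down faithfully,'' which is not an argument.

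The actual proof is much shorter and uses residual finiteness at the very first move. Since $C_G(a)$ is finite and $G$ is residually finite, there is a normal subgroup $N$ of finite index with $N\cap C_G(a)=1$. If $N\ne 1$, pick $1\ne y\in N$; then $\langle a,y\rangle$ is nilpotent and $N\cap\langle a,y\rangle$ is a nontrivial normal subgroup of it, hence meets $Z(\langle a,y\rangle)$ nontrivially. But $Z(\langle a,y\rangle)\le C_G(a)$, contradicting $N\cap C_G(a)=1$. Hence $N=1$ and $G$ is finite. Note how this pinpoints the role of residual finiteness (producing $N$) and replaces your quantitative bound by the qualitative fact that in a nilpotent group every nontrivial normal subgroup meets the centre.
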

\begin{proof} Choose $x\in G$ such that $C_G(x)$ is finite. Let $N$ be a normal subgroup of finite index such that $N\cap C_G(x)=1$. Assume that $N\neq1$ and let $1\neq y\in N$. The subgroup $\langle x,y\rangle$ is nilpotent and so the center of $\langle x,y\rangle$ has nontrivial intersection with $N$. This is a contradiction since $N\cap C_G(x)=1$. The result follows.
\end{proof}

The next lemma is  somewhat similar to Lemma 2.6 in Shalev \cite{shalev}.
\begin{lemma}\label{fcbfc} 
Let $G$ be a profinite group having a subgroup $K$ such that the conjugacy class $x^G$ is finite for each $x\in K$. Then there is an integer $n$ such that $|x^G|\leq n$ for each $x\in K$.
\end{lemma}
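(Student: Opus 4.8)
The plan is to show that the integer-valued function $x\mapsto |x^G|$ is bounded on $K$. For each positive integer $n$ set $K_n=\{x\in K:|x^G|\le n\}$. Since every element of $K$ has finite conjugacy class by hypothesis, $K=\bigcup_{n\ge1}K_n$. First I would check that each $K_n$ is closed in $K$. Indeed, $x\notin K_n$ precisely when there exist $g_0,\dots,g_n\in G$ for which the conjugates $x^{g_0},\dots,x^{g_n}$ are pairwise distinct; for a fixed tuple $(g_0,\dots,g_n)$ the set of such $x$ is open, because conjugation is continuous and the condition that finitely many points be pairwise distinct is open. Hence $K\setminus K_n$ is a union of open sets and $K_n$ is closed.

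Since $K$ is a closed subgroup, it is compact and Hausdorff, hence a Baire space. As $K$ is the countable union of the closed sets $K_n$, the Baire category theorem guarantees that some $K_N$ has nonempty interior in $K$. Every nonempty open subset of a profinite group contains a coset of an open subgroup, so there are an element $x_0\in K$ and an open subgroup $M$ of $K$ with $x_0M\subseteq K_N$; in particular $|(x_0m)^G|\le N$ for every $m\in M$, and taking $m=1$ yields $|x_0^G|\le N$.

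To pass from this open subgroup to all of $K$, I would use the submultiplicativity of class sizes: since $C_G(a)\cap C_G(b)\le C_G(ab)$, one has $|(ab)^G|\le |a^G|\,|b^G|$, and $|(a^{-1})^G|=|a^G|$. Writing $m=x_0^{-1}(x_0m)$ for $m\in M$ gives $|m^G|\le |x_0^G|\,|(x_0m)^G|\le N^2$, so the class sizes are bounded by $N^2$ on $M$. Finally, choosing coset representatives $x_1,\dots,x_d\in K$ for the finitely many cosets of $M$ in $K$, and writing an arbitrary $x\in K$ as $x=x_i m$ with $m\in M$, submultiplicativity yields $|x^G|\le |x_i^G|\,|m^G|\le N^2\max_i|x_i^G|$. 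As each $x_i$ lies in $K$, each $|x_i^G|$ is finite, and the maximum over this finite set is finite, providing the desired uniform bound.

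The main obstacle is the topological core of the argument: verifying that the level sets $K_n$ are closed and then extracting, via Baire category, an open subgroup on which the class sizes are bounded. Once uniform boundedness is secured on an open subgroup $M$, the propagation to all of $K$ is a routine consequence of the inequality $|(ab)^G|\le |a^G|\,|b^G|$ combined with the finiteness of the index $[K:M]$.
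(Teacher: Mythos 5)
Your proposal is correct and follows essentially the same route as the paper: cover $K$ by the closed level sets of $x\mapsto|x^G|$, apply the Baire category theorem to find a coset of an open subgroup on which class sizes are bounded, translate that bound to the open subgroup itself via $m=x_0^{-1}(x_0m)$ and submultiplicativity of class sizes, and finish using finitely many coset representatives. The only difference is that you spell out the verification that the level sets are closed, which the paper leaves implicit.
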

\begin{proof} For each integer $i\geq1$ set $S_i=\{x\in K;\ |x^G|\leq i\}$. The sets $S_i$ are closed. Thus, we have at most countably many sets which cover the subgroup $K$. The Baire Category Theorem \cite[Theorem~34]{kel} says that at least one of these sets has non-empty interior. It follows that there exists an open normal subgroup $N$ in $G$, an element $a\in K$, and a positive integer $m$ such that $|y^G|\leq m$ whenever $y\in a(N\cap K)$. Set $K_0=N\cap K$. Since every element  $x$ of $K_0$ can be written  as a product of $a^{-1}$ and $ax$ and since the centralizers of both $a^{-1}$ and $ax$ have indices at most $m$, we conclude that $|x^G|\leq m^2$ whenever $x\in K_0$. Let $x_1,\dots,x_s$ be a full system of coset representatives for $K_0$ in $K$ and set $m_j=|{x_j}^G|$ for $j=1,\ldots, s$. Let $n$ be the product $m_1\cdots m_sm^2$. We deduce that $|x^G|\leq n$ for each $x\in K$, as required.
\end{proof}

We can now proceed with the proof of Theorem \ref{restricted}.
\begin{proof}[Proof of Theorem \ref{restricted}] Let $P$ be a Sylow $p$-subgroup of $G$. Suppose first that the conjugacy class $x^G$ is finite whenever $x$ is a $p$-element in $G$. By Lemma \ref{fcbfc} there is an integer $n$ such that $|x^G|\leq n$ for each $x\in P$. Let $H=\langle P^G\rangle$. Theorem \ref{main} tells us that the derived group $H'$ is finite. Choose an open normal subgroup $L$ in $G$ such that $L\cap H'=1$. Then $L\cap H$ is a normal abelian pro-$p$ subgroup while $G/(L\cap H)$ is virtually pro-$p'$. This proves the theorem in the case where the conjugacy classes of $p$-elements are finite.

Therefore we assume that $P$ has an element whose centralizer in $G$ is finite. Suppose that $P$ is torsion. Zelmanov's theorem \cite{zelmanov} says that $P$ is locally nilpotent. In view of Lemma \ref{114} the Sylow subgroup $P$ is finite and so $G$ is virtually pro-$p'$, as required.

Hence, we assume that $P$ contains an element $a$ of infinite order. This implies  that $C_G(a)$ is open, because  $\langle a\rangle$ is infinite and contained in $C_G(a)$. Let $C$ be an open normal subgroup of $G$ contained in $C_G(a)$. Since every element of $C$ centralizes $\langle a\rangle$, it follows that the centralizer in $G$ of every element from $P_0=P\cap C$ is open. By Lemma \ref{fcbfc} there is an integer $m$ such that $|x^G|\leq m$ for each $x\in P_0$. Let $M=\langle {P_0}^G\rangle$. In virtue of Theorem \ref{main} the derived group $M'$ is finite. Choose an open normal subgroup $J$ in $G$ such that $J\cap M'=1$. Then $J\cap M$ is a normal abelian pro-$p$ subgroup while $G/(J\cap M)$ is virtually pro-$p'$. The proof is complete.
\end{proof}

\end{document}